\theoremstyle{plain}
\newtheorem{thm}{\protect\theoremname}
\theoremstyle{plain}
\theoremstyle{plain}
\theoremstyle{plain}
\theoremstyle{plain}
\theoremstyle{plain}
\newtheorem{rem}[thm]{\protect\Remarkname}
\theoremstyle{plain}
\providecommand{\lemmaname}{Lemma}
\providecommand{\definitionname}{Definition}
\providecommand{\propositionname}{Proposition}
\providecommand{\theoremname}{Theorem}
\providecommand{\Remarkname}{Remark}
\providecommand{\conjecturename}{Conjecture}
\providecommand{\corollaryname}{Corollary}
\newcommand{\Rn}{\ensuremath  \mathbb{R}^N_+}
\thanks{\it 2020 Mathematics Subject
	Classification: 	35J75, 35A02, 35B09}
\thanks{$^*$ corresponding author}
\title[Classification of   solutions]{The Classification of  all weak solutions to $-\Delta u={u^{-\gamma}}$ in the half space}
\email{montoro@mat.unical.it., muglia@mat.unical.it, sciunzi@mat.unical.it}
\address{Department of Mathematics and Computer Science, UNICAL, Rende (CS), Italy}
\author[L.\ Montoro]{Luigi Montoro}
\author[L.\ Muglia]{Luigi Muglia}
\author[B.\ Sciunzi]{Berardino Sciunzi $^*$}
\begin{document}

\begin{abstract}
We provide the classification of all the positive solutions to $-\Delta u=\frac{1}{u^\gamma}$ in the half space, under minimal assumption. 
\end{abstract}
\keywords{Singular solutions, half space, classification result.}

\maketitle

\section{Introduction}
We deal with the classification of positive weak solutions to the singular problem 
\begin{equation}\tag{$\mathcal P_{\gamma}$}\label{MP}
\begin{cases}
\displaystyle -\Delta u=\frac{1}{u^\gamma} & \text{in} \,\, \mathbb{R}^N_+\\
u>0 & \text{in} \,\,  \mathbb{R}^N_+\\
u=0 & \text{on} \,\, \partial \mathbb{R}^N_+,
\end{cases}
\end{equation}
where $N\geq 1$, $\gamma >0$ and $\mathbb{R}^N_+:=\{x\in \mathbb{R}^N:x_N>0\}$. \\

\noindent
\noindent The study of semilinear elliptic problems involving singular nonlinearities has attracted many studies starting from the seminal papers \cite{CraRaTa}, see  
\cite{A,BM,Bo,Br,CDG,DP,FMSL,GUI,LM,oliva} and the references therein. 
 This kind of problems,  in bounded or unbounded domains, arise
 e.g. from the study of pseudoplastic flow in fluid mechanics, see \cite{Calle,Stuart}.
In particular, in \cite{Calle}, a solution is presented for the classical case of the incompressible flow of a uniform stream past a semi-infinite flat plate at zero incidence. Flows of this type are encountered in glacial advance and in transport of coal slurries down conveyor belts, see \cite{With}. Singular semilinear elliptic problems also appear, in a slightly different formulation $f(u)=(1-u)^{-\gamma}$, in \cite{Esp1,Esp2} motivated by the theory of
the so-called MEMS devices,
namely  devices (Micro-Electro Mechanical System)  composed by a thin dielectric
elastic membrane held fixed at level $0$ placed below an upper plate at level $1$.

\

\noindent From a mathematical point of view, the case when the domain is a half space, appears as limiting case when performing blow-up/blow-down analysis. In this case, also considering more general equations of the form $-\Delta u= u^{-\gamma}+f(u)$, the limiting problem only involves the pure negative power (in a half space) since it is the dominant part near the boundaries, see see \cite{CaEsSc, EsSc}.

\

\noindent To state our main result, let us first recall  that the solutions, in general, have not $H^1$ regularity up to the boundary (see \cite{LM}) and the equation has to be understood in the following way: $u\in H^1_{loc}(\mathbb{R}^N_+)$ and 
\begin{equation}\label{w-sol}
	\int_{\Rn}(\nabla u,\nabla \varphi)=\int_{\Rn}\frac{1}{u^\gamma}\varphi \qquad \forall \varphi\in C^1_c(\mathbb{R}_+^N).
\end{equation}
The zero Dirichlet boundary condition also has to be understood in the weak meaning
\[
(u-\varepsilon)^+\varphi\chi_{\mathbb{R}^N_+}\in H^1_0(\mathbb{R}^N_+)\qquad\forall\,\varphi\in C^1_c(\mathbb{R}^N)\,.
\]
\begin{rem}
In some cases, see e.g. \cite{Bo} for the case of bounded domains, when dealing with singular equations, it is natural to understand the zero Dirichlet datum requiring that $u^{\frac{\gamma +1}{2}}\in H^1_0(\Omega)$. We stress the fact that such assumption already implies that $(u-\varepsilon)^+\in H^1_0(\Omega)$ so that our analysis do apply also to this case. 
\end{rem}

The aim of this paper is to improve our previous results in \cite{nostro} where we provided the classification of the solutions in the case $\gamma>1$ assuming a uniform $L^\infty$ bound in the strips $\{x\in \mathbb{R}^N_+\,:\, 0<x_N<b\}$. We remove such an assumption as well as we remove any regularity assumption on the solutions. Furthermore we deduce  that, in the case $0<\gamma\leq 1$ there are no solutions. \\

\noindent Our main result is the following

\begin{thm}\label{mainthm}
	
         Let $u$ be a solution to \eqref{MP}. Then, in the case   $\gamma>1$, all the solutions $u$ are one dimensional, i.e.
	\[u(x)=u(x_N)\]  and, either
	$$
	u(x_N)=\frac{(\gamma+1)^\frac{2}{\gamma+1}}{(2\gamma-2)^\frac{1}{\gamma+1}}
	(x_N)^\frac{2}{\gamma+1}
	$$
	or
	$$
	u(x_N)=\lambda^{-\frac{2}{\gamma+1}}v(\lambda x_N) \quad \lambda>0,
	$$
	where $v(t)\in C^2(\mathbb{R}_+)\cap C(\overline{\mathbb{R}_+})$ is the unique positive solution to
	\begin{eqnarray}
		\begin{cases}
			\displaystyle - v''=v^{-\gamma} \quad & t>0\\
			v(0)=0\quad  \displaystyle &\lim_{t\to+\infty}v'(t)=1.
		\end{cases}
	\end{eqnarray}
In the case where $0<\gamma\leq 1$,  there are no solutions to \eqref{MP}. 
\end{thm}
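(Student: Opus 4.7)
The overall strategy is to show that every solution depends only on $x_N$ and then to classify the resulting one-dimensional profiles by direct ODE analysis. A preliminary step is interior regularity: since $u>0$ in $\mathbb{R}^N_+$, the nonlinearity $u^{-\gamma}$ is locally bounded and standard elliptic bootstrap applied to \eqref{w-sol} gives $u\in C^{2,\alpha}_{loc}(\mathbb{R}^N_+)$. The weak boundary datum is exploited via a lower barrier: comparison with the explicit power $w_c(x_N)=c\,x_N^{2/(\gamma+1)}$, which is the unique pure power solving the one-dimensional equation, yields $u(x)\gtrsim x_N^{2/(\gamma+1)}$ near $\partial\mathbb{R}^N_+$.

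With the barrier in hand I would run the moving plane method in the $x_N$-direction, comparing $u$ with its reflection $u_\lambda(x',x_N)=u(x',2\lambda-x_N)$ on the slab $\{0<x_N<\lambda\}$, testing with $(u-u_\lambda-\varepsilon)^+\eta_R$ cut off away from $\partial\mathbb{R}^N_+$. The monotonicity of $s\mapsto s^{-\gamma}$ produces the correct sign in the comparison, so $u\le u_\lambda$ for every $\lambda>0$, giving $\partial_{x_N}u\ge 0$; the strong maximum principle then sharpens this to $\partial_{x_N}u>0$. The crucial improvement over \cite{nostro} lies in replacing the $L^\infty$ assumption on horizontal strips by an a priori bound: the monotonicity in $x_N$, the scaling invariance $u(x)\mapsto \lambda^{-2/(\gamma+1)}u(\lambda x)$ of \eqref{MP}, and a doubling/blow-up argument should force the matching upper estimate $u(x',x_N)\le C\,x_N^{2/(\gamma+1)}$ uniformly in $x'\in\mathbb{R}^{N-1}$. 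Once this is available, the moving plane method also closes in each tangential direction $e_i$, $i<N$, yielding $\partial_{x_i}u\equiv 0$ and hence $u=u(x_N)$.

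At this point the equation reduces to $-u''=u^{-\gamma}$ on $(0,\infty)$ with $u(0)=0$, $u>0$ and $u'\ge 0$. Multiplying by $u'$ yields the first integral
\begin{equation*}
\tfrac12\bigl(u'(t)\bigr)^2 \;=\; C+\tfrac{u(t)^{1-\gamma}}{\gamma-1}\qquad(\gamma\ne 1),
\end{equation*}
and $\tfrac12(u')^2=C-\log u$ for $\gamma=1$. When $0<\gamma\le 1$ the right-hand side tends to $-\infty$ as $u\to\infty$, forcing $u$ to be bounded; concavity then yields $u'\to 0$ while $u''=-u^{-\gamma}$ stays bounded away from zero, so $u'$ cannot converge, a contradiction. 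Hence \eqref{MP} admits no solution in this range. When $\gamma>1$ one necessarily has $C\ge 0$: if $C=0$, separation of variables integrates to the explicit power $(\gamma+1)^{2/(\gamma+1)}(2\gamma-2)^{-1/(\gamma+1)}\,x_N^{2/(\gamma+1)}$; if $C>0$, the rescaling $v(s)=\lambda^{2/(\gamma+1)}u(s/\lambda)$ with $\lambda=(2C)^{(\gamma+1)/(2\gamma-2)}$ normalises $\lim_{s\to\infty}v'(s)=1$ and identifies the second family, whose base profile $v$ is unique by standard ODE theory.

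\emph{Main obstacle.} The delicate step is the a priori bound $u(x',x_N)\le C\,x_N^{2/(\gamma+1)}$ on horizontal strips: without it, the moving plane argument in the tangential directions cannot close at infinity and the whole 1D reduction collapses. Extracting this uniform control using only the weak formulation \eqref{w-sol} and the weak boundary datum, with no a priori assumption on the behaviour of $u$ at infinity, is where the technical weight of the proof will be concentrated.
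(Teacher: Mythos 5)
The weight of your argument rests on the a priori estimate $u(x',x_N)\le C\,x_N^{2/(\gamma+1)}$ uniformly in $x'$, which you correctly identify as the crux but leave entirely unproved: a ``doubling/blow-up argument should force'' it is not an argument, since any blow-up/doubling scheme here would need a Liouville-type classification in the half-space --- which is precisely the theorem being proved --- or compactness that you do not have. In effect your plan reproduces the strategy of \cite{nostro}, where the uniform $L^\infty$ bound on strips was an \emph{assumption}; the whole point of the present paper is to remove it, and the paper does so not by establishing your strip bound but by a different device. The paper first proves local boundedness up to the boundary by a Serrin-type Moser iteration adapted to the singular setting (Theorem \ref{localbound}), then continuity up to the boundary by sliding comparison with solutions of the annular problem \eqref{eq:autv} (Theorem \ref{continuity}); these steps are needed even to justify the test-function manipulations, and your proposal glosses over them (interior bootstrap alone does not give boundedness near $\partial\mathbb{R}^N_+$). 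The 1-D symmetry is then obtained by applying the Kelvin transform: $\hat u(x)=|x|^{2-N}u(x/|x|^2)$ satisfies $-\Delta\hat u=|x|^{-\beta_\gamma}\hat u^{-\gamma}$ with $|x_\lambda|\le|x|$ in $\Sigma_\lambda$ giving the good sign in the reflection comparison, and, crucially, $\hat u\le C|x|^{2-N}$ provides the decay at infinity that lets the cut-off terms $I_1(R)$, $I_2(\varepsilon)$ vanish --- so no uniform bound on strips is ever needed. Without either the Kelvin transform or a genuine proof of your uniform upper estimate, your moving-plane argument in the tangential directions cannot be closed at infinity, so the proposal has a genuine gap at its central step; the terminal ODE analysis (nonexistence for $0<\gamma\le1$, classification for $\gamma>1$ via the first integral) is essentially the same as the paper's and is fine.
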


\begin{rem}
Theorem \ref{mainthm} concludes the picture for the classification of solutions to semilinear elliptic equations in half spaces involving power nonlinearities. Actually it is well known that, in the case of positive powers ($\gamma <0$ with our notation), solutions do not exist under very general assumptions, see the recent developments in \cite{DUP1,DUP2}, and the references therein. The case of negative powers ($\gamma >0$ with our notation) still exhibit non existence of the solutions for the case $0<\gamma\leq 1$. On the contrary, if $\gamma >1$ solutions exist and are classified by our Theorem \ref{mainthm}.
\end{rem}

\noindent Our proof is based on the following steps:\\

\

\noindent \textbf{\textit{Step 1}}: we start proving the local boundedness of the solutions in $\overline{\mathbb{R}_+}$. We follow the clever technique of J. Serrin \cite{serrinacta} to carry out the Moser iteration scheme. Some care is needed since solutions have not $H^1$-regularity up to the boundary.\\

\noindent \textbf{\textit{Step 2}}:  we exploit the  barriers technique of \cite{DP,GUI} to deduce that, near the boundary, the solution is trapped among to continuous functions and, therefore, is continuous up to the boundary. \\

\noindent \textbf{\textit{Step 3}}: we prove that the solution has 1-D symmetry. Here we develop a completely different proof that the one in \cite{nostro}, based on the use of the Kelvin transformation. Again the lack of regularity of the solutions up to the boundary is the crucial issue. \\

\noindent \textbf{\textit{Conclusion}}: the proof of the main result is consequently reduce to a ODE analysis.

\section{Regularity of the solutions}
We start this section studying the local boundedness of the solutions. Let us consider $H^1_{loc}(\mathbb{R}^N_+)$ weak solutions to the problem that fulfil the Dirichlet datum in the generalized meaning so that
\[
(u-\varepsilon)^+\in H^1_{loc}(\overline{\mathbb{R}^N_+})\,.
\]
The proof that follows is based on a standard use of the Moser iteration scheme. 
There is here an added difficulty given by the singular nature of the problem that we overcome 
 following closely the technique developed by J. Serrin in \cite{serrinacta}. 

\begin{thm}\label{localbound}
If  $u\in H^1_{loc}(\mathbb{R}^N_+)$ is a weak solution problem $\mathcal P_{\gamma}$, then $u$ is locally bounded.
\end{thm}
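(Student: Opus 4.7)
The plan is to run Moser iteration not on $u$ itself---since $u$ may vanish and produce a non-integrable source---but on the positive-part shift $v_\varepsilon:=(u-\varepsilon)^+$ for an arbitrary fixed $\varepsilon>0$. The advantage is twofold: the hypothesis $(u-\varepsilon)^+\varphi\chi_{\mathbb{R}^N_+}\in H^1_0(\mathbb{R}^N_+)$ gives $v_\varepsilon$ a genuine zero Dirichlet character on $\partial\mathbb{R}^N_+$, while on the support $\{v_\varepsilon>0\}=\{u>\varepsilon\}$ the source $u^{-\gamma}$ is automatically bounded by $\varepsilon^{-\gamma}$. Once $v_\varepsilon$ is shown to be locally bounded, the trivial inequality $u\le v_\varepsilon+\varepsilon$ yields the theorem, and it does not matter that the constants may depend on $\varepsilon$.

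The first step is to derive the subsolution inequality
$$\int_{\mathbb{R}^N_+}(\nabla v_\varepsilon,\nabla\varphi)\leq \varepsilon^{-\gamma}\int_{\mathbb{R}^N_+}\varphi \qquad \forall\,\varphi\in C^1_c(\mathbb{R}^N_+),\ \varphi\ge 0.$$
To prove it I would test \eqref{w-sol} against $\psi_\delta(u-\varepsilon)\varphi$, where $\psi_\delta:\mathbb{R}\to[0,1]$ is a smooth non-decreasing approximation of $\chi_{(0,\infty)}$: the $\psi_\delta'$ contribution has the right sign and may be discarded, while the right-hand side is controlled by $\varepsilon^{-\gamma}\int\varphi$; letting $\delta\to 0^+$ then gives the claim by dominated convergence. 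Using the generalized Dirichlet condition, the extension of $v_\varepsilon$ by zero to all of $\mathbb{R}^N$ belongs to $H^1_{loc}(\mathbb{R}^N)$ and is a non-negative subsolution of $-\Delta w\le \varepsilon^{-\gamma}$ in the whole space. Classical interior Moser iteration on this extension---testing with $\eta^2 v_\varepsilon^{2\beta+1}$ (truncated from above by a parameter $M$ and then sent to $\infty$), applying Caccioppoli, Sobolev embedding, and iterating along a geometric sequence of exponents and shrinking radii---produces the desired $L^\infty$ estimate
$$\sup_{B_r\cap\mathbb{R}^N_+}v_\varepsilon \leq C\bigl(\|v_\varepsilon\|_{L^2(B_R\cap\mathbb{R}^N_+)}+1\bigr),$$
which completes the proof since $v_\varepsilon\in L^2_{loc}$ by hypothesis.

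I expect the main obstacle to be precisely the derivation of the subsolution inequality: the weak formulation only admits test functions in $C^1_c(\mathbb{R}^N_+)$, and $u^{-\gamma}$ need not be locally integrable near $\partial\mathbb{R}^N_+$, so one cannot simply test with $\eta^2 u^{2\beta+1}$ as in the textbook scheme. This is where Serrin's trick from \cite{serrinacta} enters: arranging the test functions to vanish on the exact set $\{u\le\varepsilon\}$ where the source misbehaves renders the inequality completely regular and reduces the whole problem to classical De Giorgi--Moser theory for bounded subsolutions on the full space.
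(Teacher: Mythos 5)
Your proposal is correct, and it takes a genuinely different route of execution from the paper, even though the key trick is shared. Both arguments exploit the same two facts: on the set $\{u>\varepsilon\}$ the singular right-hand side is bounded by $\varepsilon^{-\gamma}$, and the generalized Dirichlet datum makes $(u-\varepsilon)^+$ an $H^1$ object up to the hyperplane. The paper, however, stays in the half space and runs Serrin's scheme directly: it works with $u_\varepsilon=(u-\varepsilon)^++\varepsilon^{-\gamma}$, so that $u^{-\gamma}\le\varepsilon^{-\gamma}\le u_\varepsilon$ on the support of the test functions, uses the truncated-power test functions $G(u)\eta^2$ of \cite{serrinacta} (admissible up to the boundary because $G(u)$ vanishes where $u\le\varepsilon$), and after the Caccioppoli--Sobolev step concludes by quoting Serrin's iteration verbatim. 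You instead linearize the problem: you show $(u-\varepsilon)^+$ is a nonnegative weak subsolution of $-\Delta w\le\varepsilon^{-\gamma}$, extend it by zero across $\partial\mathbb{R}^N_+$, and then invoke classical interior De Giorgi--Moser theory (essentially Theorem 8.17 of \cite{GT}) for the extension. Your reduction is more elementary in that no boundary-adapted iteration is needed and the zero-order structure $G(u)u_\varepsilon\le q\,F F'u_\varepsilon$ of the paper is replaced by a constant source; the paper's version has the advantage of never leaving the half space and of producing the estimate in exactly the form of Serrin's equation (19), so the iteration can be cited wholesale. Your interior derivation of the subsolution inequality via the test functions $\psi_\delta(u-\varepsilon)\varphi$ is sound (the $\psi_\delta'$ term has a sign, and the extension of \eqref{w-sol} to bounded $H^1$ test functions with compact support in the open half space is a routine density argument since $u^{-\gamma}\in L^1_{loc}(\mathbb{R}^N_+)$). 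The one step you should not leave as an assertion is the claim that the zero extension of $v_\varepsilon$ is a subsolution \emph{across} the hyperplane: this is exactly where the generalized Dirichlet condition does its work and where the up-to-the-boundary conclusion is encoded. It is a standard lemma, and it can be proved, for instance, by testing the half-space inequality with $\varphi\,\min\{v_\varepsilon/\delta,1\}$ (admissible since $v_\varepsilon$ is locally in $H^1_0$ of the half space), observing that the additional term $\delta^{-1}\int_{\{0<v_\varepsilon<\delta\}}\varphi|\nabla v_\varepsilon|^2$ is nonnegative and can be discarded, and letting $\delta\to0$; with that detail supplied, your proof is complete.
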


\begin{proof}
	For $\varepsilon >0$ fixed, set
	\[
	u_\varepsilon\,:=\, (u-\varepsilon)^+\,+\,\frac{1}{\varepsilon^\gamma}\,.
	\]
	Furthermore, for $q\geq 1$ and $l>\frac{1}{\varepsilon^\gamma}$, we define 
	\begin{equation}
	F(u_\varepsilon)\,:=\, \begin{cases}
		u_\varepsilon^q\qquad\qquad \qquad\qquad\qquad if\quad \frac{1}{\varepsilon^\gamma}\leq u_\varepsilon\leq l\\
		ql^{q-1}u_\varepsilon-(q-1)l^q\qquad\quad  if \quad u_\varepsilon\geq l
	\end{cases}
	\end{equation}
and, consequently, 
\begin{equation}
	G(u)\,:=\, F(u_\varepsilon)F'(u_\varepsilon)-\frac{q}{\varepsilon^{\beta\gamma}}
\end{equation}
where $\beta$ is such that $2q=\beta+1$. We now indicate by $\eta$ a standard smooth cut-off function and consider the test function
\[
\varphi\,:=\,G(u)\eta^2\,.
\]
It is easy to check that this is a suitable test function for our problem. Note in fact that $(u-\varepsilon)^+\in H^1_{loc}(\overline{\mathbb{R}^N_+}) $ by the definition of the Dirichlet boundary datum and $G(u)$ is linear for $u_\varepsilon>l$. Consequently we deduce that
\begin{equation}\nonumber
\int_{\mathbb{R}^N_+}\, |\nabla u|^2G'(u)\eta^2=\int_{\mathbb{R}^N_+} -2G(u)\eta(\nabla u\,,\nabla\eta)+\frac{G(u)\eta^2}{u^\gamma}
\end{equation}
so that 
\begin{equation}\nonumber
	\begin{split}
		\int_{\mathbb{R}^N_+}\, |\nabla u|^2G'(u)\eta^2&\leq 2 \int_{\mathbb{R}^N_+} G(u)\eta|\nabla u||\nabla\eta|+\frac{G(u)\eta^2}{u^\gamma} \\
		& \leq 2 \int_{\mathbb{R}^N_+} G(u)\eta|\nabla u||\nabla\eta|+ \int_{\mathbb{R}^N_+}\frac{G(u)\eta^2}{\varepsilon ^\gamma} \\
		&\leq 2 \int_{\mathbb{R}^N_+} G(u)\eta|\nabla u||\nabla\eta|+ \int_{\mathbb{R}^N_+}G(u)\eta^2u_\varepsilon\,. \\
	\end{split}
\end{equation}
Taking into account the definition of $F(\cdot)$ and $G(\cdot)$, it is easily seen that
	\begin{equation}\nonumber
	G'(u)\,:=\, \begin{cases}\displaystyle 
		\frac{\beta}{q}\,(F')^2
\qquad\,\, \,\,\, if\quad (u-\varepsilon)<l-\frac{1}{\varepsilon^\gamma} \\\\
		\,\,\,\,\displaystyle (F')^2\qquad\quad  if \quad (u-\varepsilon)>l-\frac{1}{\varepsilon^\gamma}
	\end{cases}
\end{equation}
and $G\leq FF'$. Thus, setting
\[
v(x)\,:=\,F(u_\varepsilon)
\]
and observing that $u_\varepsilon\,F'\leq q F$,  we get that
\begin{equation}\nonumber
	\begin{split}
	\min 	\left\{1,\frac{\beta}{q}\right\}\int_{\mathbb{R}^N_+}|\nabla v|^2\eta^2
		&= \min 	\left\{1,\frac{\beta}{q}\right\}\int_{\mathbb{R}^N_+}|\nabla u|^2(F'(u_\varepsilon))^2\eta^2\\
		&\leq \int_{\mathbb{R}^N_+}|\nabla u|^2G'(u)\eta^2\\
		& \leq 2 \int_{\mathbb{R}^N_+} FF'\eta|\nabla u||\nabla\eta|+ q\int_{\mathbb{R}^N_+}v^2\eta^2\,. \\
	\end{split}
\end{equation}
Recalling that $\nabla v=F'(u_\varepsilon)\nabla u$, 
by a simple use of the Young inequality, we deduce that 
\begin{equation}\label{stimagrad}
\int_{\mathbb{R}^N_+}|\nabla v|^2\eta^2
	\leq C \int_{\mathbb{R}^N_+} |\nabla\eta|^2v^2+ C \,q\int_{\mathbb{R}^N_+}v^2\eta^2\,. 
\end{equation}
The Sobolev inequality now provides that
\begin{equation}\nonumber
	\left(\int_{\mathbb{R}^N_+} (v\eta)^{2^*}\right)^{\frac{1}{2^*}}\leq 
	\left(\int_{\mathbb{R}^N_+}|\nabla v|^2\eta^2
	+ |\nabla\eta|^2v^2+2|\nabla\eta||\nabla v|\eta v\right)^{\frac{1}{2}}
\end{equation}
so that, using again the Young inequality and \eqref{stimagrad}, we get
\begin{equation}\nonumber
	\left(\int_{\mathbb{R}^N_+} (v\eta)^{2^*}\right)^{\frac{1}{2^*}}\leq 
	Cq\, \left(\int_{\mathbb{R}^N_+}v^2|\nabla \eta|^2\right)^{\frac{1}{2}}+
	Cq\left(\int_{\mathbb{R}^N_+}v^2\eta^2\right)^{\frac{1}{2}}\,.
\end{equation}
This estimate recovers the ones in the proof of Theorem 1 of \cite{serrinacta} (see equation (19)). Consequently the remaining part of the proof can be carried over repeating verbatim the argument of Serrin, thus proving the local boundedness of $u_\varepsilon$ that implies the local boundedness of $u$. 
\end{proof}
Standard elliptic estimates (see \cite{GT})  show that the solution is smooth in the interior of the half space. Furthermore, the continuity of the solution in the interior of the domain follows by standard regularity estimates, since the right hand side is bounded there (see e.g. \cite{GT}).
From Lemma 4 in \cite{nostro}, and for $\gamma>1$, we know that
there exists a constant $C=C(\gamma)$ such that 
\[u \geq C x_N^{\frac{2}{\gamma+1}}\quad \text{in}\,\, \mathbb R^N_+.\]
The case $0<\gamma\leqslant 1$ is easier to study using the first eigenfunction as subsolution as in \cite{nostro} (following \cite{LM}).

We can therefore exploit the barriers arguments in  \cite{DP,GUI} to deduce the following:

\begin{thm}[\cite{DP,GUI}]\label{continuity}
	If  $u\in H^1_{loc}(\mathbb{R}^N_+)$ is a weak solution problem $\mathcal P_{\gamma}$, then $u\in C^0(\overline{\mathbb{R}^N_+})$.
\end{thm}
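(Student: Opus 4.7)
The plan is to trap $u$ near any boundary point between $0$ and a continuous supersolution which vanishes at that point. Fix $x_0 \in \partial \mathbb{R}^N_+$; after a tangential translation, assume $x_0 = 0$, and use Theorem \ref{localbound} to choose $R, M > 0$ with $u \leq M$ on the half-ball $B_{2R}^+ := B_{2R}(0) \cap \mathbb{R}^N_+$. Since $u > 0$ in $\mathbb{R}^N_+$ and is smooth in the interior by standard elliptic regularity (as the authors have noted), it suffices to prove $\limsup_{x\to 0,\, x\in \mathbb{R}^N_+} u(x) = 0$; combined with the interior smoothness, this then upgrades to $u\in C^0(\overline{\mathbb{R}^N_+})$.

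The central step is the construction of an upper barrier. I would choose a Lipschitz auxiliary domain $\Omega \subset B_{2R}^+$ whose boundary consists of a flat piece $F \subset \partial \mathbb{R}^N_+$ (an open neighborhood of $0$) and an interior piece $S$ at positive distance from $\partial \mathbb{R}^N_+$; a concrete choice is the lens $\Omega = B_{2R}(Re_N) \cap \mathbb{R}^N_+$. On $\Omega$, I would solve the auxiliary singular Dirichlet problem
\begin{equation*}
-\Delta w = w^{-\gamma} \text{ in } \Omega, \qquad w = 0 \text{ on } F, \qquad w = M \text{ on } S,
\end{equation*}
which by the classical theory for singular semilinear equations admits a unique positive weak solution $w$. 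Continuity of $w$ up to the relative interior of $F$ (with value $0$) is forced by the explicit one-dimensional solution $\Phi(x_N) = C\, x_N^{2/(\gamma+1)}$ used as a local upper barrier at each point of $F$, matched by a corresponding lower barrier obtained e.g.\ from a truncation/mollification of the nonlinearity.

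The remaining step is the comparison $u \leq w$ in $\Omega$, which is where the sliding-by-comparison argument described in Step 2 of the outline enters. I would subtract the equations and test the weak formulation of $-\Delta(u-w) = u^{-\gamma} - w^{-\gamma}$ against $(u - w - \delta)^+\, \eta^2$, for small $\delta > 0$ and a cutoff $\eta$ supported away from $F$; the boundary bound $u \leq M = w$ on $S$ makes the test admissible, and the monotonicity of $t \mapsto -t^{-\gamma}$ gives
\begin{equation*}
\int_{\Omega} |\nabla (u-w-\delta)^+|^2\, \eta^2 \leq \int_{\Omega}(u^{-\gamma}-w^{-\gamma})(u-w-\delta)^+\eta^2 \leq 0.
\end{equation*}
Expanding the cutoff to exhaust $\Omega$ and then letting $\delta \to 0^+$ yields $u \leq w$ throughout $\Omega$, and continuity of $w$ at $0$ then gives $u(x) \to 0$, as required.

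The main obstacle I anticipate is the rigorous justification of the comparison test in the weak/singular setting: $u$ belongs only to $H^1_{\mathrm{loc}}(\mathbb{R}^N_+)$ and the Dirichlet datum on $F$ is imposed only through $(u-\delta)^+ \in H^1_0(\mathbb{R}^N_+)$. One must therefore work with $u_\delta := (u-\delta)^+ + \delta$, which does belong to $H^1_{\mathrm{loc}}(\overline{\mathbb{R}^N_+})$, apply the comparison to $u_\delta$ versus $w$, and then recover the desired inequality for $u$ by letting $\delta \to 0^+$ using the monotonicity of the nonlinearity. A secondary subtlety is the behaviour of the barrier $w$ near the corner $\overline F \cap \overline S$, which is handled purely locally by the explicit power barrier $\Phi$ combined with standard boundary regularity; this local analysis is the technical heart of the argument.
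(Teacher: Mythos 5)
Your overall strategy (trap $u$ below a continuous barrier vanishing at the boundary point, via a comparison that respects the weak Dirichlet condition) is the same in spirit as the paper's, but your concrete barrier construction creates gaps at exactly the points the paper's proof is engineered to avoid. The paper does not solve a mixed boundary value problem with zero data on a flat piece $F$ of the half-space boundary. Instead it takes the solution $w$ of the purely homogeneous singular problem on an annulus $B_3(x_0)\setminus\overline{B_1(x_0)}$ centred \emph{below} the hyperplane, uses that $w_k:=kw$ ($k>1$) is a supersolution, performs the comparison in $\Omega_t=\bigl(B_2(x_0)\setminus\overline{B_1(x_0)}\bigr)\cap\mathbb{R}^N_+$ for centres $x_0=(0',t)$ with $-2<t<-1$, and only then slides $t\to-1$ so that the zero set of $w_k$ touches the hyperplane at a single tangency point. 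The point of this sliding is that for $t<-1$ the barrier is strictly positive and smooth on the flat part of $\partial\Omega_t$ and, crucially, $w_k\in H^1(\Omega_t)$, so $(u-\varepsilon-w_k)^+\chi_{\Omega_t}\in H^1_0(\Omega_t)$ and the energy comparison is legitimate. In your setup the barrier vanishes on all of $F$, i.e.\ it is itself a singular-boundary solution on the very portion of the boundary where the comparison must be performed. Since such solutions behave like $x_N^{2/(\gamma+1)}$ near $F$, your $w$ fails to be $H^1$ up to $F$ as soon as $\gamma\ge 3$; then $(u-w-\delta)^+$ need not be an admissible ($H^1_0$-approximable) test function near $F$, and neither your cutoff device (the displayed inequality also drops the $\nabla\eta$ cross term, and "expanding the cutoff to exhaust $\Omega$" leaves uncontrolled boundary contributions near $F$) nor the $u_\delta=(u-\delta)^++\delta$ substitution repairs this, because the obstruction is the barrier's gradient, not $u$'s.

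Two further points. First, the theorem must cover $0<\gamma\le 1$ (it feeds into the 1-D symmetry and the nonexistence part of the main theorem), but for $\gamma\le 1$ the function $C\,x_N^{2/(\gamma+1)}$ has exponent $\ge 1$ and satisfies $-\Delta\bigl(Cx_N^{2/(\gamma+1)}\bigr)\le 0$, so it is a \emph{subsolution}, not an upper barrier; your continuity claim for $w$ on $F$ therefore has no proof in that range (one would need, e.g., $A x_N^{\beta}$ with $0<\beta<1$, or a different argument). Second, existence, uniqueness and boundary continuity for your mixed problem ($w=0$ on $F$, $w=M$ on $S$, with a data jump at the corner $\overline F\cap\overline S$, which moreover lies on $\partial\mathbb{R}^N_+$ rather than at positive distance from it) is not covered by the classical zero-data theory you invoke and would itself require a sub/supersolution construction plus a corner analysis. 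The paper's annulus-plus-scaling-plus-sliding device avoids all three issues simultaneously: the barrier comes from the standard zero-data theory on a smooth bounded domain, it works for every $\gamma>0$ since $k\ge 1$ gives $-\Delta(kw)=kw^{-\gamma}\ge (kw)^{-\gamma}$, and during the comparison its singular zero set never meets $\overline{\mathbb{R}^N_+}$.
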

Actually the results of \cite{DP,GUI} are not  stated in the case of the half space. In any case the arguments do apply since they are based on local tools. The local boundedness is actually implicitly used in the technique.

\section{ 1-D symmetry}\label{sezpde}

We start this section recalling the Kelvin transformation of the solutions $u$ given by:
\[
\hat u(x)\,:=\,\frac{1}{|x|^{N-2}}u\left(\frac{x}{|x|^2}\right)\qquad x\neq0\,.
\]
It is known that 
\[
\Delta \hat u=\frac{1}{|x|^{N+2}}\Delta u | _ \frac{x}{|x|^2},  \qquad x\neq0\,.
\]
so that
\begin{equation}\nonumber
-\Delta \hat u\,=\,\frac{1}{|x|^{\beta_\gamma}}\frac{1}{\hat u^\gamma},\qquad x\neq0\qquad\text{for}\,\,\,\beta_\gamma=N+2+\gamma(N-2)\,.
\end{equation}
Such equation is classical in the interior of the half space since $u$ is smooth there. On the contrary the transformation produces a possible singularity at zero and, in any case, $\hat u$ has not $H^1$ regularity up to the boundary as well as the starting solution $u$. Consequently the equation is still understood as follows:

\begin{equation}\label{kelvin}
	\int_{\Rn}\nabla \hat u \cdot\nabla \varphi=\int_{\Rn}\frac{1}{|x|^{\beta_\gamma}}\frac{1}{\hat u^\gamma}\varphi, \qquad \forall \varphi\in C^1_c(\mathbb{R}_+^N).
\end{equation}
We are ready to prove
\begin{thm}\label{1D}
	If  $u\in H^1_{loc}(\mathbb{R}^N_+)$ is a weak solution problem $\mathcal P_{\gamma}$, then $u$ has 1-D symmetry, namely $$u=u(x_N).$$
\end{thm}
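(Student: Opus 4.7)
The plan is to apply the moving plane method in directions parallel to $\partial\Rn$ — not to $u$ (which has no decay at infinity) but to its Kelvin transform $\hat u$, whose decay at infinity follows from the continuity of $u$ at $0$ established in Theorem \ref{continuity}. For $N\ge 2$ (the case $N=1$ being vacuous), $u(0)=0$ and $y/|y|^2\to 0$ yield $\hat u(y)=|y|^{2-N}u(y/|y|^2)\to 0$ as $|y|\to\infty$; moreover, the inversion preserves $\partial\Rn$, so $\hat u$ inherits the weak Dirichlet datum of $u$ on $\partial\Rn\setminus\{0\}$ and is singular only at the origin. Recentering the Kelvin inversion at arbitrary boundary points will eventually upgrade the planar symmetry obtained below into full 1-D symmetry.

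Fix a unit vector $e$ with $e\cdot e_N=0$, and for $\lambda\in\mathbb{R}$ set $T_\lambda:=\{x\cdot e=\lambda\}$ and $\Sigma_\lambda:=\{x\in\Rn:x\cdot e>\lambda\}$. Let $R_\lambda$ denote the reflection across $T_\lambda$ and $\hat u_\lambda:=\hat u\circ R_\lambda$, which satisfies $-\Delta\hat u_\lambda=|R_\lambda(x)|^{-\beta_\gamma}\hat u_\lambda^{-\gamma}$. The elementary identity $|R_\lambda(x)|^2-|x|^2=4\lambda(\lambda-x\cdot e)$ shows that for $\lambda>0$ and $x\in\Sigma_\lambda$ one has $|R_\lambda(x)|<|x|$, hence $|R_\lambda(x)|^{-\beta_\gamma}>|x|^{-\beta_\gamma}$. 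Combined with $\hat u^{-\gamma}<\hat u_\lambda^{-\gamma}$ on $\{\hat u>\hat u_\lambda\}$, this produces
\[
-\Delta(\hat u-\hat u_\lambda)=|x|^{-\beta_\gamma}\hat u^{-\gamma}-|R_\lambda(x)|^{-\beta_\gamma}\hat u_\lambda^{-\gamma}<0 \quad \text{on } \{\hat u>\hat u_\lambda\}\cap\Sigma_\lambda.
\]
I then test with $(\hat u-\hat u_\lambda)^+$, appropriately truncated as in the proof of Theorem \ref{continuity} to accommodate the weak Dirichlet datum. On $T_\lambda$ the two functions coincide, on $\partial\Rn\cap\Sigma_\lambda$ both vanish, at infinity both decay, and in an open neighbourhood of the singular point $R_\lambda(0)=2\lambda e\in\partial\Rn$ the positive part $(\hat u-\hat u_\lambda)^+$ is identically zero because $\hat u_\lambda$ blows up there while $\hat u$ stays locally bounded by Theorem \ref{localbound}. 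Integration by parts therefore yields $\int_{\Sigma_\lambda}|\nabla(\hat u-\hat u_\lambda)^+|^2\le 0$, so $\hat u\le\hat u_\lambda$ in $\Sigma_\lambda$ for every $\lambda>0$.

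Letting $\lambda\to 0^+$ and running the same argument with $-e$ in place of $e$, I conclude that $\hat u\equiv\hat u_0$, i.e.\ $\hat u$ is symmetric across $\{x\cdot e=0\}$. Since $e$ is arbitrary parallel to $\partial\Rn$, $\hat u(x',x_N)$ depends on $x'$ only through $|x'|$, and the inversion formula $u(y)=|y|^{N-2}\hat u(y/|y|^2)$ propagates this to radial symmetry of $u$ in $x'$ about the origin of $\mathbb{R}^{N-1}$. Applying the same procedure to the Kelvin transform of the translated solution $u(\cdot+(x_0',0))$ — still a solution of \eqref{MP} — gives radial symmetry of $u$ in $x'$ about every point $x_0'\in\mathbb{R}^{N-1}$, and a function on $\mathbb{R}^{N-1}$ that is radial about every point is constant. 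Hence $u(x',x_N)=u(x_N)$.

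The main obstacle I anticipate is the rigorous justification of the comparison: the lack of $H^1$-regularity of $\hat u$ up to $\partial\Rn$ requires the $\varepsilon$-shift device already used in Theorem \ref{continuity}, and the singularity of $\hat u_\lambda$ at $2\lambda e\in\partial\Rn\cap\overline{\Sigma_\lambda}$ forces the restriction $\lambda>0$ and calls for a careful localization so that $(\hat u-\hat u_\lambda)^+$ is an admissible test function — this works precisely because $\hat u_\lambda$ dominates near $2\lambda e$ and so the test function is identically zero in an open neighbourhood of the singularity.
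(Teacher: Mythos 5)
Your overall strategy --- Kelvin transform, reflection comparison across hyperplanes orthogonal to the boundary, then symmetry in every horizontal direction and recentering --- is essentially the paper's route, and your sign bookkeeping ($|R_\lambda(x)|\le|x|$ in $\Sigma_\lambda$ together with the monotonicity of $s\mapsto s^{-\gamma}$) is correct. The genuine gap is the step where you dispose of the singular point $R_\lambda(0)=2\lambda e$ of $\hat u_\lambda$: you assert that $(\hat u-\hat u_\lambda)^+$ vanishes identically in an open neighbourhood of $2\lambda e$ ``because $\hat u_\lambda$ blows up there while $\hat u$ stays locally bounded''. This is not justified and cannot hold as stated: the Kelvin transform has only a \emph{possible} singularity at the origin, and in any case $\hat u$ vanishes (continuously, by Theorem \ref{continuity}) on $\partial\Rn\setminus\{0\}$, so its reflection $\hat u_\lambda$ vanishes on the boundary at points arbitrarily close to $2\lambda e$; any blow-up of $\hat u_\lambda$ can only occur along interior cones, never uniformly in a full neighbourhood of a boundary point. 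Hence the set $\{\hat u>\hat u_\lambda\}$ may a priori accumulate at $2\lambda e$, your test function need not vanish near the singularity, and the integration by parts is not licensed there. The paper needs no blow-up information at all: it inserts a capacitary cut-off $\varphi_\varepsilon$ vanishing near the reflected origin, with $\int|\nabla\varphi_\varepsilon|^2\to 0$, and kills the corresponding error term using only that $z_\tau=(\hat u-\hat u_\lambda-\tau)^+\le\hat u$ is bounded near that point. You would need this (or an equivalent removable-singularity device) to make your comparison rigorous.

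A second, related gap is the behaviour at infinity: ``at infinity both decay'' plus a bare integration by parts over the unbounded region $\Sigma_\lambda$ is not enough, since you do not know a priori that $(\hat u-\hat u_\lambda)^+\in H^1(\Sigma_\lambda)$. One needs the quantitative decay $\hat u\le C|x|^{2-N}$ (which follows from the local boundedness of $u$ near the origin) together with cut-offs $\phi_R$ and an estimate of the outer error term; and in dimension $N=2$ there is no decay rate at all, so the paper resorts to a logarithmic capacitary cut-off between $B_R$ and $B_{R^2}$ --- your sketch does not address $N=2$. Two smaller remarks: the paper replaces your proposed $\varepsilon$-shift by the simpler $\tau$-shift $(\hat u-\hat u_\lambda-\tau)^+$, which keeps the support of the test function away from $\partial\Rn$ precisely because $\hat u$ is continuous up to the boundary and vanishes there; and your concluding step (symmetry across $\{x\cdot e=0\}$ for all horizontal $e$, then recentering) is fine and equivalent to the paper's use of translation and rotation invariance.
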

\begin{proof}
By Theorem \ref{localbound} and  Theorem \ref{continuity} we know that $u$ is locally bounded and continuous up to the boundary. Furthermore $u$ is smooth in the interior of the half space by standard elliptic estimates. 
For any $\lambda<0$, we set
\[
\Sigma_\lambda\,:=\, \{x\in \mathbb R^N_+\,:\,  x_1< \lambda\}\,
\]
adopting here 
 the notation  $x=(x_1,x')$ with $x'\in \mathbb R^{N-1}$. Furthermore we set
\[
x_\lambda=(2\lambda-x_1,x')
\]
\[
\hat u_\lambda(x)=\hat u(x_\lambda)=
\hat u(2\lambda-x_1,x') \quad\text{in}\quad 
\Sigma_\lambda\,.
\]
By the reflection invariance of the Laplace operator, we deduce that
\begin{equation}\label{kelvinlambda}
	\int_{\Rn}\nabla \hat u_\lambda \cdot\nabla \varphi=\int_{\Rn}\frac{1}{|x_\lambda|^{\beta_\gamma}}\frac{1}{\hat u_\lambda^\gamma}\varphi \qquad \forall \varphi\in C^1_c(\mathbb{R}_+^N).
\end{equation}
We stress that the transformation produces a possible singularity for $u$ at the origin and, consequently, a possible singularity for $\hat u$ at $0_\lambda$  the reflected point of the origin, namely $0_\lambda=(-2\lambda,x')$.
Now we consider the cut-off functions $0\leq \phi_R,\varphi_\varepsilon\leq 1$ such that
\begin{equation}\label{troncar}
	\begin{cases}
		\phi_R=1 & \text{in}\,\, B_R(0)\\
		\phi_R=0 & \text{in}\,\, \mathbb R^{N}\setminus B_{2R}(0)\\
		\displaystyle |\nabla\phi_R |\leq \frac{C}{R},& \text{in}\,\, B_{2R}(0)\setminus B_{R}(0)\,.
	\end{cases}
\end{equation}
In dimension $N=2$ we will exploit the fact that, by a capacitary argument, namely considering a suitable truncation of the fundamental solution,   it is possible to assume that
$$
\int_{B_{R^2}(0)\setminus B_{R}(0) }|\nabla \phi_R|^2\underset{R\rightarrow \infty}{\longrightarrow}\,0\,.
$$
For the cut-off function $\varphi_\varepsilon$ we assume that:
\begin{equation}\label{tronca0}
	\varphi_{\varepsilon}=\begin{cases}
		0 & \text{in}\,\, B_{\varepsilon^2}(0_\lambda)\\
		1 & \text{in}\,\, \mathbb R^{N}\setminus B_{\varepsilon}(0_\lambda).
	\end{cases}
\end{equation}
Again by a capacitary argument (and here in any dimension) we may and do assume that
\[
\int_{B_{\varepsilon}(0_\lambda)\setminus B_{\varepsilon^2}(0_\lambda)
}|\nabla\varphi_\varepsilon|^2\,\underset{\varepsilon\rightarrow 0}{\longrightarrow}\,0\,.
\]
Given $\tau>0$ we now consider
\[
w=w_{\varepsilon,\tau,R,\lambda}\,:=\, (\hat u-\hat u_\lambda-\tau)^+\varphi_\varepsilon^2\phi_R^2\,.
\]
Since, by Theorem \ref{continuity} $u$ is continuous up to the boundary, then $\hat u$ is continuous up to the boundary too, for $x\neq 0$. Consequently, recalling the construction of the cut-off functions,  $w$ is a $H^1$-function with its support that is compactly contained in the half space.  Consequently, by standard density arguments,   we can use it  as test function in \eqref{kelvin} and in \eqref{kelvinlambda}. Thus we get that
\begin{eqnarray*}
	&&\int_{\Sigma_\lambda}(\nabla \hat u,\nabla w)=\int_{\Sigma_\lambda}\frac{1}{|x|^{\beta_\gamma}}\frac{1}{\hat u^\gamma} w,\\
	&&\int_{\Sigma_\lambda}(\nabla \hat u_\lambda,\nabla w)=\int_{\Sigma_\lambda}\frac{1}{|x_\lambda|^{\beta_\gamma}}\frac{1}{\hat u_\lambda^\gamma} w,
\end{eqnarray*}
so that 
\begin{eqnarray*}
	\int_{\Sigma_\lambda}(\nabla (\hat u-\hat u_\lambda),\nabla w)=\int_{\Sigma_\lambda}\left(\frac{1}{|x|^{\beta_\gamma}}\frac{1}{\hat u^\gamma} 
	-\frac{1}{|x_\lambda|^{\beta_\gamma}}\frac{1}{\hat u_\lambda^\gamma} \right)w\leq 0\,,
\end{eqnarray*}
since $\hat u\geq\hat u_\lambda$ on the support of $w$ and $|x_\lambda|\leq|x|$ in $\Sigma_\lambda$.\\

\noindent We now set
\[
z_\tau\,:=\, (\hat u-\hat u_\lambda-\tau)^+
\] 
and, by direct computation, we deduce that
\begin{equation}\nonumber
	\int_{\Sigma_\lambda}|\nabla z_\tau|^2\varphi_\varepsilon^2\phi_R^2\leq 
		2\int_{\Sigma_\lambda}z_\tau|\nabla z_\tau|\varphi_\varepsilon^2\phi_R|\nabla \phi_R|+
		2\int_{\Sigma_\lambda}z_\tau|\nabla z_\tau|\varphi_\varepsilon\phi_R^2|\nabla \varphi_\varepsilon|\,.
\end{equation}
A simple application of the Young inequality
\[
8ab\leq a^2+16b^2
\] allows us to deduce that
\begin{equation}\nonumber
	\begin{split}
	\int_{\Sigma_\lambda}|\nabla z_\tau|^2\varphi_\varepsilon^2\phi_R^2\leq 
	& 8\int_{\Sigma_\lambda\cap (B_{2R}(0)\setminus B_{R}(0))}z_\tau^2\varphi_\varepsilon^2|\nabla \phi_R|^2+\\
	& 8\int_{\Sigma_\lambda\cap B_{2\varepsilon}(0_\lambda)\setminus B_{\varepsilon}(0_\lambda)} z_\tau^2\phi_R^2|\nabla \varphi_\varepsilon|^2\,\\
	&=I_1(R)+I_2(\varepsilon)\,.
\end{split}
\end{equation}
Note now that
\[
(z_\tau)^2\leq \hat u^2\leq \frac{C}{|x|^{2(N-2)}}\qquad\text{and}\qquad |\nabla\phi_R|\leq\frac{C}{R}
\]
so that, observing that $\mathcal{L}\,(B_{2R}(0)\setminus B_{R}(0))=O(R^N)$, it follows
\[
\underset{R\rightarrow +\infty}{\lim }I_1(R)=0,
\]
if $N>2$. 
If else $N=2$ we just note that $z_\tau$ is bounded far from the origin. Furthermore, as recalled here above, by a capacitary argument,  $\phi_R$ can be chosen in such a way that $\displaystyle \int_{B_{R^2}(0)\setminus B_{R}(0) }|\nabla \phi_R|^2=o(1)$ so that again we recover that $I_1(R)$
goes to zero as $R$ tends to $\infty$. On the other hand 
\[
\int_{\Sigma_\lambda\cap B_{\varepsilon}(0_\lambda)\setminus B_{\varepsilon^2}(0_\lambda)} z_\tau^2\phi_R^2|\nabla \varphi_\varepsilon|^2\leq C \int_{\Sigma_\lambda\cap B_{\varepsilon}(0_\lambda)\setminus B_{\varepsilon^2}(0_\lambda)} |\nabla \varphi_\varepsilon|^2=o(1)
\]
since $z_\tau$ is bounded in $B_{\varepsilon}(0_\lambda)\setminus B_{\varepsilon^2}(0_\lambda)$. Note that here we exploit the fact that $z_\tau\leq \hat u$ by construction. Consequently we also proved that 
\[
\underset{\varepsilon\rightarrow 0}{\lim }\,\,I_2(\varepsilon)=0\,.
\]
Concluding, passing to the limit, we get that
\[
\int_{\Sigma_\lambda}|\nabla z_\tau|^2=0
\]
causing that, by the arbitrariness  of $\tau>0$, it follows that
\[
\hat u\leq \hat u_\lambda\qquad \text{in}\quad \Sigma_\lambda\quad\text{for any}\,\,\lambda<0\,.
\]
Repeating the argument in the opposite $(-x_1)$-direction we conclude that, necessarily,
\[
\hat u (x_1,x')=\hat u (-x_1,x')\,.
\]
By the definition of the Kelvin transformation, this information
 translates into the same symmetry property for $u$, namely:
\[
 u (x_1,x')= u (-x_1,x')\,.
\]
Taking into account the translation invariance of the equation $-\Delta u=u^{-\gamma}$ with respect to the $x_1$-direction, it is  easy to see that $u$ has to be symmetric with respect to any hyperplane $\{x_1=\lambda \}$ which causes that necessarily $u$ is constant in the $x_1$-direction. Exploiting now the rotation invariance of the problem with respect to any direction parallel to the boundary, we finally deduce that:
\[
u = u (x_N)\,.
\]
\end{proof}

\begin{proof}[Proof of Theorem \ref{mainthm}] Once we know that the solutions have 1-D symmetry the proof of the result, in the case $\gamma >1$, follows by the ODE analysis carried out in \cite{nostro}. \\
	
\noindent Let us skip therefore to the case $0<\gamma\leq 1$ to prove that, in this case, there are no global solutions. We analyse the equation
\[
-u''(t)=\frac{1}{u(t)^\gamma}\qquad \text{for}\quad t>0
\] 
with $u(0)=0$. We write the identity $-u''(t)u'(t)=\frac{u'(t)}{u(t)^\gamma}$ and integrate from $1$ to $t>1$ to deduce that
\begin{equation}\label{trucc}
\frac{	1}{2} (u'(t))^2= \frac{	1}{2} (u'(1))^2+F(u(1))-F(u(t))
\end{equation}
where $F(\cdot)$ is a diverging function since $\gamma\leq 1$. By convexity we know that $u'(t)$ is a decreasing function. Therefore necessarily $u'(t)>0$ for any $t>0$ because otherwise the solution cannot be global and the claim is proved. Consequently $u(t)$ is monotone increasing.
If now 
\[
\underset{t\rightarrow +\infty}{\lim}\, u(t) =+\infty
\]
we easily get a contradiction by \eqref{trucc} and the fact that $\underset{u\rightarrow +\infty}{\lim}\, F(u) =+\infty$. If else 
\[
\underset{t\rightarrow +\infty}{\lim}\, u =L>0,
\]
we observe that there would exists a sequence of points $t_n$ such that $\underset{n\rightarrow +\infty}{\lim}\, u''(t_n)=0$, which is not possible since the right hand side of the equation do not approaches zero at such a sequence of points. This proves the claim and the theorem.
\end{proof}

\vspace{1cm}


\

\begin{center}
{\sc Data availability statement}

\

All data generated or analyzed during this study are included in this published article.
\end{center}

\

\begin{center}
	{\sc Conflict of interest statement}
	
	\
	
	The authors declare that they have no competing interest.
\end{center}

\bibliographystyle{elsarticle-harv}

\end{document}